\newcommand{\cmark}{\ding{51}}%
\newcommand{\xmark}{\ding{55}}%
\newcommand{\mycomment}[1]{}
\newcommand{\Bmath}[1]{\mbox{\bf {#1}}}
\def\v{{\Bmath v}}
\newtheorem{theorem}{Theorem}[section]
\newtheorem{corollary}[theorem]{Corollary}
\newtheorem{lemma}[theorem]{Lemma}
\theoremstyle{definition}
\begin{document}

\title{Efficient Digital Quadratic Unconstrained Binary Optimization Solvers for SAT Problems}

\author{Robert Simon Fong}
\email{r.s.fong@bham.ac.uk}
\affiliation{School of Computer Science, University of Birmingham, Birmingham, B15 2TT, UK}

\author{Yanming Song}
\email{yanmings@connect.hku.hk}
\affiliation{Department of Computer Science, The University of Hong Kong, Pokfulam Road, Hong Kong SAR}

\author{Alexander Yosifov}
\email{alexanderyyosifov@gmail.com}
\affiliation{Space Research and Technology Institute, Bulgarian Academy of Sciences, Sofia, 1113, Bulgaria}

\date{\today}

\begin{abstract}
Boolean satisfiability (SAT) is a propositional logic problem of determining whether an assignment of variables satisfies a Boolean formula. Many combinatorial optimization problems can be formulated in Boolean SAT logic --- either as $k$-SAT decision problems or Max $k$-SAT optimization problems, with \textit{conflict-driven} (CDCL) solvers being the most prominent. Despite their ability to handle large instances, CDCL-based solvers have fundamental scalability limitations. In light of this, we propose recently-developed quadratic unconstrained binary optimization (QUBO) solvers as an alternative platform for $3$-SAT problems. To utilize them, we implement a $2$-step [$3$-SAT]-[Max $2$-SAT]-[QUBO] conversion procedure and present a rigorous proof to calculate the number of both satisfied and violated clauses of the original $3$-SAT instance from the transformed Max $2$-SAT formulation. We then demonstrate, through numerical simulations on several benchmark instances, that \textit{digital} QUBO solvers can achieve state-of-the-art accuracy on $78$-variable $3$-SAT benchmark problems. Our work facilitates the broader use of quantum annealers on noisy intermediate-scale quantum (NISQ) devices, as well as their quantum-inspired digital counterparts, for solving $3$-SAT problems. 
\end{abstract}

\maketitle

\section{Introduction}
The Boolean $k$-SAT problem (for $k\geq 3$) is an NP-complete \cite{SCOOK71} combinatorial optimization problem with many commercial applications \cite{10.1145/337292.337611, 10.1145/296399.296450, 10.1145/1146909.1147034, 10.1109/ICCAD51958.2021.9643505, 108614, 1097859}, making it an invaluable tool for addressing complex decision-making and problem-solving tasks. SAT problems are solved by determining whether an assignment of Boolean variables, either \texttt{TRUE} or \texttt{FALSE}, satisfies a propositional Boolean formula in conjunctive normal form (CNF), such that the formula evaluates to \texttt{TRUE}. Fundamentally, this is characterized by search for an \textit{optimal} configuration (\textit{e.g.} minimum of an objective function) of variables within a vast but finite solution space.

Consequently, most SAT solvers utilize search algorithms in conjunction with decision and branching heuristics \cite{sa2, sa3}. Where the CDCL framework \cite{CMV21}, due to its proven efficiency in solving industrial SAT problems \cite{Authenticus:P-009-199}, has emerged as the backbone of modern solvers. Typically, high-performance CDCL solvers employ a multitude of sophisticated clause minimization \cite{a15090302}, distance \cite{distance}, and history-based \cite{Liang_Ganesh_Poupart_Czarnecki_2016} heuristics, tailored for crafted benchmarks. Despite accurately solving larger instances, CDCL solvers have inherent scaling constraints \cite{scaling}. While some \textit{learning-focused} search techniques exist, attempts to parallelize CDCL solvers often lead to excessive memory usage, which degrades the performance and can render them unsuitable for some inherently large-scale or computationally-hard problems \cite{7225110, 4397270}.

The ubiquitous nature of SAT problems prompted the search for quantum computing solutions, where recent hardware advancements \cite{dwave123} enabled the embedding of the QUBO form of many discrete optimization problems (\textit{e.g.} SAT) \cite{NPmapping} to quantum annealing devices (see, \textit{e.g.}, \cite{Date2019EfficientlyEQ}), such that the objective becomes minimizing a quadratic polynomial over binary variables.\footnote{By virtue of the $3$-SAT problem being NP-complete, any solver that can efficiently solve it can be also used for all problems in the NP class with only polynomial time to verify solutions. Moreover, every SAT problem can be expressed as a 3-SAT problem in CNF \cite{reduce}.} The hope is that, due to their noise resilience (compared to some gate-based models), they can be applied on problems with non-trivial logic structure in the NISQ era. Still, despite continued efforts, demonstrating a clear scaling or accuracy advantage remains challenging. 

Regardless, the main underlying principles of adiabatic quantum computing remain valid, thus stimulating the development of \textit{digital} alternatives; so-called quantum-inspired QUBO solvers. Indeed, they recently emerged as a viable alternative due to several desirable features, such as parameter tuning flexibility and inherent capacity to effectively handle large instances \textit{in parallel}, see \cite{nature}. To that end, progress in ground state convergence \cite{PhysRevE.58.5355}, energy landscape exploration \cite{energy}, and multidimensional function minimization \cite{minimization} bolster the ability of QUBO solvers to successfully tackle discrete optimization problems.

In the absence of full-fledged quantum computers, QUBO solvers can represent a prominent general-purpose optimization platform. The present work is a step in this direction as we propose a $2$-step [$3$-SAT]-[Max $2$-SAT]-[QUBO] conversion procedure for embedding $3$-SAT problems of arbitrary size to available QUBO solvers (see Section \ref{sec:sec2} and Appendix \ref{appendix} and \ref{appendixb} for details). By construction, the conversion allows us to retrieve the solution of the original $3$-SAT problem from the QUBO solution. Then, in Section \ref{sec:properties}, we present a rigorous proof to retrieve the number of both \textit{satisfied} and \textit{violated} clauses of the original 3-SAT instance from the transformed Max $2$-SAT formulation. Finally, in Section \ref{sec:experiment}, we conduct numerical simulations on (i) several publicly available benchmark instances of varying sizes and clause densities, and (ii) randomly-generated instances in the \textit{hard} region, as defined in the seminal works \cite{mezard, mezard2}. In both scenarios we show that QUBO solvers can indeed match the accuracy of RC2 \cite{rc2} --- a CDCL-based state-of-the-art Max SAT solver. To the best of our knowledge, this is the first direct performance-oriented comparison of commercial QUBO solvers to RC$2$, and fills an important gap in the literature. Our results offer theoretical advancements in the field of discrete optimization and establish the QUBO platform as a high-performance alternative to CDCL for solving $3$-SAT problems on near-term devices.

\section{Converting 3-SAT problems to QUBO}
\label{sec:sec2}
To use available digital QUBO solvers and take advantage of their inherent properties, such as parameter tuning flexibility and scalability, the logical structure of the original $3$-SAT problem must be converted to the appropriate form, see \cite{NPmapping}. Here, we implement a $2$-step [$3$-SAT]-[Max $2$-SAT]-[QUBO] conversion with the so-called $(7,10)$-gadget \cite{GAREY1976237}. For that reason, we herein define ``QUBO" as any heuristic that can efficiently solve hard $3$-SAT problems, whose use is enabled by the $(7,10)$-gadget described in \cite{GAREY1976237} and Section \ref{sec:3to2} below. Moreover, the presented method is necessary since the $3$-SAT Hamiltonian does not have an Ising quantization, see Appendix \ref{appendix} for a detailed proof. 

\subsection{3-SAT to Max 2-SAT}
\label{sec:3to2}

Consider an arbitrary $3$-SAT problem $\dagger$ with $M$ clauses and $N$ literals expressed in CNF, that is a set of logical \texttt{AND} operations on a string of $M$ disjunctive $3$-literal clauses $\left\{\Omega_k\right\}_{k= 1}^M$ each expressed as logical \texttt{OR} operations on its $N$ literals $\left\{\sigma_i\right\}_{i=1}^N$. Then we can write $\dagger = \Omega_{1} \land \Omega_{2} \land \dots \land \Omega_{M}$ with the $k^{th}$ clause expressed as:
\begin{align*}
    \Omega_{k}=\left(\operatorname{sgn}\left(\sigma^{k}_{j_{1}}\right)\sigma^{k}_{j_{1}}\lor\operatorname{sgn}\left(\sigma^{k}_{j_{2}}\right)\sigma^{k}_{j_{2}}\lor\operatorname{sgn}\left(\sigma^{k}_{j_{3}}\right)\sigma^{k}_{j_{3}}\right), 
\end{align*}
 where $k\in \{1,2,\dots,M\}$, $j_1, j_2, j_3 \in \left\{1,\ldots,N\right\}$, and $\operatorname{sgn}\left(\sigma^{k}_{j_{i}}\right)\sigma^{k}_{j_{i}} = \sigma^{k}_{j_{i}} \text{ or } \neg \sigma^{k}_{j_{i}}$, where equivalently, $\operatorname{sgn}\left(\sigma^{k}_{j_{i}}\right) = \mathtt{TRUE}$ and $\mathtt{FALSE}$, respectively. For the rest of the paper we will, without loss of generality, assume $\dagger$ consists of clauses with exactly $3$ literals.
 
Without loss of generality, we now illustrate how the \textbf{$(7,10)$-gadget} \cite{GAREY1976237} converts a $3$-SAT problem to Max $2$-SAT with a simple clause $\Omega_k =\left(\sigma^{k}_{j_{1}}\lor\sigma^{k}_{j_{2}}\lor\sigma^{k}_{j_{3}}\right)$, where the construction extends naturally to clauses with negations of literals. First, we define exactly \textbf{one} ancillary variable $d^k$, and then replace $\Omega_k$ by the corresponding set of $10$ Max $2$-SAT clauses:

\begin{align}
\label{eqn:Max2from3}
    \Omega'_k := & \left\{ (\sigma^{k}_{j_{1}}), (\sigma^{k}_{j_{2}}), (\sigma^{k}_{j_{3}}), (d^{k}), \right. \nonumber \\
    & (\neg \sigma^{k}_{j_{1}} \lor \neg\sigma^{k}_{j_{2}}), (\neg\sigma^{k}_{j_{1}} \lor \neg\sigma^{k}_{j_{3}}), (\neg\sigma^{k}_{j_{2}} \lor \neg\sigma^{k}_{j_{3}}), \nonumber \\
    & \left.({\sigma}^{k}_{j_{1}} \lor \neg d^{k}), ({\sigma}^{k}_{j_{2}} \lor \neg d^{k}), ({\sigma}^{k}_{j_{3}} \lor \neg d^{k}) \right\}.
\end{align}
After replacing every clause of the original $3$-SAT problem in this manner, the conversion of the of the entire $3$-SAT problem can therefore be expressed compactly as:

\begin{align*}
    \text{3-SAT} &\rightarrow \text{Max 2-SAT} \\
    \bigwedge_{k=1}^M \Omega_k &\mapsto \bigwedge_{k=1}^{M} \Omega'_k.
\end{align*}
While the index of the converted Max $2$-SAT problem goes to $M$, each $\Omega_k'$ contains $10$ clauses.
Therefore, an arbitrary $3$-SAT problem with $M$ clauses can be converted to a Max $2$-SAT problem with $10M$ clauses and $N+M$ variables, see Table \ref{table:Max2from3:config}.

Evidently, there are $8$ possible choices of $(\sigma^{k}_{j_{1}}, \sigma^{k}_{j_{2}}, \sigma^{k}_{j_{3}})$, with each being either $0$ or $1$. Amongst them, only $(0,0,0)$ will not satisfy $\Omega_k$. In particular, given that an assignment $(\sigma^{k}_{j_{1}}, \sigma^{k}_{j_{2}}, \sigma^{k}_{j_{3}})$ satisfies $\Omega_k$, then exactly $7$ clauses of $\Omega'_k$ will be satisfied. Otherwise, if the initial assignment does not satisfy $\Omega_k$, then exactly $6$ clauses of $\Omega'_k$ will be satisfied. This is summarized in Table \ref{table.sat_vio_table} in Section \ref{sec:properties}, where the properties of the conversion are studied.

\begin{table}[ht!]
\begin{center}
\begin{tabular}{|l|l|l|}
\hline
         & 3-SAT              & Max 2-SAT             \\ \hline
Clauses  & $M$  & $ 10M$ \\ \hline
Literals & $N$ & $N +M$ \\ \hline
\end{tabular}
\caption{Changes in total number of clauses and literals in the $3$-SAT to Max $2$-SAT conversion (Eq. \eqref{eqn:Max2from3}).}
\label{table:Max2from3:config}
\end{center}
\end{table}

\subsection{Max 2-SAT to QUBO}
\label{sec:2toqubo}
The previous Section outlined the conversion of an arbitrary $3$-SAT problem to a Max $2$-SAT problem. Here, we present the second step of the procedure. Consider a Max 2-SAT problem $\ddagger := \bigwedge_{k=1}^{M'} \Omega'_k$ with $M'$ clauses $\left\{ \Omega'_k\right\}_{k=1}^{M'} $and $N'$ literals $\left\{\sigma'_i\right\}_{i=1}^{N'}$, where each clause contains at most two literals. We now define a QUBO problem on $\ddagger$ over the search space $\mathbf{x} \in \left\{0,1\right\}^{N'}$, with the goal to minimize the total number of violated clauses. Formally:

We first define a set of variables $\{v_{j}^{k}\}$ over the $M'$ clauses. Here, $v_{j}^{k}\in \{-1,0,1\}$, where $v_{j}^{k}=-1 (+1)$ if $\sigma'_{j}$ is negated (unnegated) in the $k^{th}$ clause, respectively, and $v_{j}^{k}=0$ for all clauses where $\sigma'_{j}$ does not appear in the clause. More precisely, for each $\Omega'_k \in \ddagger$ containing literals with sub-indices $j_1,j_2$, we define variables $\left\{v_j^k\right\}$ where:
\begin{enumerate}
    \item If $j = j_1$ or $j_2$:
    \begin{align*}
v_{j}^k = 2 \cdot \operatorname{sgn}\left(\sigma'^{k}_{j}\right) - 1 \Leftrightarrow \begin{cases} 
      1 & \operatorname{sgn}\left(\sigma'^{k}_{j}\right) = \texttt{TRUE} \\
      -1 & \operatorname{sgn}\left(\sigma'^{k}_{j}\right) = \texttt{FALSE}
   \end{cases}
\end{align*}
    \item $v_j^k = 0$ otherwise.
\end{enumerate}

For each $k \in \{1,\ldots,M'\}$, we can isolate the entries $x_{j_i}^k$ from the input $\mathbf{x} = \left(x_1,\ldots,x_{j_1}^k, \ldots, x_{j_2}^k,\ldots, x_{N'}\right) \in \left\{0,1\right\}^{N'}$, and  define the following intermediate variable $S_{j_i}^k$:
\begin{align}
\label{eq:intermediate2}
S_{j_i}^k &=\left( \frac{1+v_{j_i}^k}{2}-v_{j_i}^k x_{j_i}^k \right). \nonumber \\ 
S^k &=S_{j_1}^k\cdot S_{j_2}^k = \prod_{i=1}^{2}\left( \frac{1+v_{j_i}^k}{2}-v_{j_i}^k x_{j_i}^k \right).
\end{align}

One can check that if $\mathbf{x}$ satisfies $\Omega'_k$, then $S^k = 0$ and $S^k = 1$ otherwise. The total number of violated clauses in Max $2$-SAT problem $\ddagger = \bigwedge_{k=1}^{M'} \Omega'_k$, given input $\mathbf{x}$, is:
\begin{align*}
\mathcal{V}(\mathbf{x}) & =\sum_{k=1}^{M'}\prod_{i=1}^{2}\left( \frac{1+v_{j_i}^k}{2}-v_{j_i}^k x_{j_i}^k \right) \\
& =\sum_{k=1}^{M'}\Bigg(v_{j_1}^k v_{j_2}^k x_{j_1}^k x_{j_2}^k - v_{j_1}^k\frac{1+v_{j_2}^k}{2}x_{j_1}^k \\
& \quad \quad - v_{j_2}^k\frac{1+v_{j_1}^k}{2}x_{j_2}^k  + \frac{1+v_{j_1}^k}{2}\frac{1+v_{j_2}^k}{2}\Bigg),
\end{align*}
where the optimal solution $\mathbf{x^*}$ is thus given by:
\begin{equation}
\mathbf{x}^*=\mathop{\arg\min}\limits_{\mathbf{x} \in \left\{0,1\right\}^{N'}} \mathcal{V}(\mathbf{x}).
\end{equation}

Equivalently, upon re-scaling $\mathcal{V}(\mathbf{x})$ by a factor of $2$ and dropping the constant term, we obtain the QUBO objective function $q(\mathbf{x})$ corresponding to $\mathcal{V}(\mathbf{x})$, which is to be minimized:
\begin{equation}
\label{eq:qubo}
q(\mathbf{x})=\frac{1}{2}\mathbf{x}^\top \mathbf{Q}\mathbf{x} + \mathbf{b}^\top \mathbf{x} + c, 
\end{equation}
where for all $j_1, j_2 \in \{1,\ldots,N'\}$:
\begin{equation}
Q_{j_1 j_2}=2\sum_{k=1}^{M'}v_{j_1}^k v_{j_2}^k
\end{equation}
\begin{equation}
b_{j_1}=-\sum_{k=1}^{M'}v_{j_1}^k\left( v_{j_2}^k+1 \right).
\end{equation}
For the unassigned $Q_{ij}$ and $b_i$ in the above procedure, we set $Q_{ij}=0$, $b_{i}=0$.

\section{Properties}
\label{sec:properties}

In this Section, we illustrate how information about the original $3$-SAT problem can be extracted from the converted Max $2$-SAT formulation. In particular, we show how to retrieve the number of both violated and satisfied clauses of the initial $3$-SAT problem.

We denote the initial $3$-SAT problem by $ \dagger := \bigwedge_{k=1}^M \Omega_k$ and $\ddagger := \bigwedge_{k=1}^{M} \Omega'_k$ the Max $2$-SAT converted from $\dagger$.
Recall from Section \ref{sec:3to2}, for each clause $\Omega_{k}=\left(\sigma^{k}_{j_{1}}\lor\sigma^{k}_{j_{2}}\lor\sigma^{k}_{j_{3}}\right)$ of the original $3$-SAT problem $\dagger$ there are $8$ possible choices of assignments of $\left(\sigma^{k}_{j_{1}},\sigma^{k}_{j_{2}},\sigma^{k}_{j_{3}}\right)$ with $\sigma_{j_i}^k \in \left\{0,1\right\}$ for $i = 1,2,3$. By Theorem 1.1 of \cite{GAREY1976237}, $\Omega_k$ can be converted to a Max $2$-SAT clause $\Omega_k'$, where the satisfactory table is outlined in Table \ref{table.sat_vio_table}.

\begin{table}[ht!]
\begin{center}
\begin{tabular}{|c|c|c|c|c|}
\hline
& $\left(\sigma^{k}_{j_{1}},\sigma^{k}_{j_{2}},\sigma^{k}_{j_{3}}\right)$ & $d^k$   & $\Omega_k'$ & $\Omega_k$ \\ \hline
Case 1 & $(0,0,0)$                                                              & $0$   & $6$ \cmark   & \xmark      \\ \cline{3-5} 
                &                                                       & $1$   & $4$ \cmark   & \xmark      \\ \hline
Case 2 & $(0,0,1), (0,1,0)$,                                                    & $0$   & $7$ \cmark   & \cmark      \\  \cline{3-5} 
               &         $(1,0,0)$                                      & $1$   & $6$ \cmark   & \cmark      \\ \hline
Case 3 & Otherwise                                                              & $0,1$ & $7$ \cmark   & \cmark      \\ \hline
\end{tabular}
\end{center}
\caption{Satisfactory Table of a single clause the $3$-SAT to Max $2$-SAT conversion described by Eq. \eqref{eqn:Max2from3}. For the column corresponding to $\Omega_k'$, the label ($\#$ \cmark) denote the number ($\#$) of satisfied clause(s). For the column corresponding to $\Omega_k$, \cmark and \xmark  denote satisfied and violation respectively.}
\label{table.sat_vio_table}
\end{table}
Hence, we arrive at the following statement, which completes the results in \cite{reduce2}
\begin{lemma}
Given a $3$-SAT clause $\Omega_k$ and its corresponding Max $2$-SAT conversion $\Omega_k'$, there exists a configuration of ancillary variable $d^k$ such that: if an assignment satisfies $\Omega_k$ then exactly $7$ clauses are satisfied in $\Omega_k'$, otherwise if an assignment does not satisfy $\Omega_k$ then exactly $6$ clauses are satisfied in $\Omega_k'$.
\end{lemma}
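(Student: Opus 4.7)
The plan is to prove the lemma by direct enumeration on the Boolean assignment $(\sigma^{k}_{j_{1}},\sigma^{k}_{j_{2}},\sigma^{k}_{j_{3}})$. Because the $(7,10)$-gadget in Eq.~\eqref{eqn:Max2from3} treats every literal symmetrically --- replacing any $\sigma$ by $\neg\sigma$ throughout the gadget simply relabels the ten resulting clauses without altering their logical structure --- it suffices to analyse the unnegated representative $\Omega_k = (\sigma_1 \lor \sigma_2 \lor \sigma_3)$ and to parametrize the eight possible assignments by their Hamming weight $t := \sigma_1 + \sigma_2 + \sigma_3 \in \{0,1,2,3\}$. Note that $\Omega_k$ is violated precisely when $t=0$, so the target of the proof reduces to producing, for each $t$, a value of $d^k$ for which the count of satisfied clauses in $\Omega_k'$ equals $6$ (if $t=0$) or $7$ (if $t \geq 1$).

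The bulk of the work is then a clean accounting. I would partition the ten clauses of $\Omega_k'$ into the four families dictated by \eqref{eqn:Max2from3}: the three positive unit clauses $(\sigma_i)$, the ancillary unit $(d^k)$, the three pairwise clauses $(\neg\sigma_i \lor \neg\sigma_j)$, and the three ancillary-coupled clauses $(\sigma_i \lor \neg d^k)$. Counting satisfied clauses family by family --- observing that the pairwise clauses are violated precisely on pairs of simultaneously-true literals, and that the ancillary-coupled ones are all satisfied when $d^k=0$ but contribute only $t$ satisfactions when $d^k=1$ --- yields the closed form
\begin{equation*}
S(t, d^k) \;=\; t + \mathbf{1}[d^k=1] + \Bigl(3 - \binom{t}{2}\Bigr) + \Bigl(3\,\mathbf{1}[d^k=0] + t\,\mathbf{1}[d^k=1]\Bigr).
\end{equation*}

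Finally, I would evaluate $S(t, d^k)$ on the $4 \times 2$ grid and read off, for each $t$, an explicit witnessing value of $d^k$: the choice $d^k = 0$ gives $S(0,0) = 6$, $S(1,0) = 7$, $S(2,0) = 7$, while $d^k = 1$ gives $S(3,1) = 7$. This shows that for $t = 0$ (the unique violating assignment of $\Omega_k$) we attain exactly $6$, and for $t \in \{1,2,3\}$ (the satisfying assignments) we attain exactly $7$, as claimed. I expect the main obstacle to be almost entirely bookkeeping: carefully justifying the sign-symmetry reduction so that the positive-literal case genuinely covers all sign-pattern/assignment combinations, and being explicit that the claimed values are \emph{attained} rather than merely upper-bounded --- a distinction the compressed Case~3 row of Table~\ref{table.sat_vio_table} partially obscures at $t = 3$, where $S(3,0) = 6 \neq 7$ and only the choice $d^k = 1$ witnesses the lemma.
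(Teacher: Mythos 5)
Your proof is correct and follows essentially the same route as the paper, which simply records the case enumeration in Table~\ref{table.sat_vio_table} (appealing to Theorem 1.1 of \cite{GAREY1976237}) rather than deriving the closed-form count $S(t,d^k)$ as you do. Your explicit accounting is in fact more careful than the paper's: as you observe, the collapsed Case 3 row of Table~\ref{table.sat_vio_table} is inaccurate for the all-true assignment, where $d^k=0$ yields only $6$ satisfied clauses and only $d^k=1$ attains $7$ --- this does not affect the lemma, which asserts only the \emph{existence} of a witnessing $d^k$, but it is a genuine discrepancy in the table as printed.
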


For the rest of the Section, we show that the number of violated and satisfied clauses of the original $3$-SAT problem, \textbf{for each of the cases} outlined in Table \ref{table.sat_vio_table}, can be retrieved from the converted Max $2$-SAT instance once the problem $\ddagger$ is solved. We begin with the special case where the converted Max $2$-SAT problem is solved \emph{and} all the ancillary variables are identically zero. 

\begin{theorem}
\label{thm:retrieve:d0}
Let $\ddagger := \bigwedge_{k=1}^{M} \Omega'_k$ denote a Max $2$-SAT problem converted from $3$-SAT problem $ \dagger := \bigwedge_{k=1}^M \Omega_k$. Suppose $\ddagger$ is solved when the ancillary variables all vanish and the final number of satisfied clauses of $\ddagger$ is known. Then the number of violated and satisfied clauses of $\dagger$ can be retrieved. 
\end{theorem}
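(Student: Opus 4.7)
The plan is to extract the two unknowns (number of violated and number of satisfied clauses of $\dagger$) from a single linear equation together with the identity $V+S=M$. First, I would restrict the satisfactory table (Table \ref{table.sat_vio_table}) to the sub-column corresponding to $d^k=0$ and observe that it collapses to a clean dichotomy: for every clause $\Omega_k$ of $\dagger$, its converted block $\Omega'_k$ contributes exactly $7$ satisfied sub-clauses if $\Omega_k$ is satisfied by the assignment (Cases~2 and~3 with $d^k=0$), and exactly $6$ satisfied sub-clauses if $\Omega_k$ is violated (Case~1 with $d^k=0$). This dichotomy is really just the content of the preceding lemma specialised to $d^k=0$, so no further case analysis is needed beyond reading the table.

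Next, let $V$ and $S$ denote the number of violated and satisfied clauses of $\dagger$, respectively, under the given assignment; by definition $V+S=M$. Let $T$ denote the known total number of satisfied clauses of $\ddagger$ at the solution. Summing the per-block contributions over all $k=1,\dots,M$ gives
\begin{equation*}
T \;=\; \sum_{k=1}^{M}\bigl(\text{satisfied sub-clauses of }\Omega'_k\bigr)\;=\;7S+6V.
\end{equation*}
Combining this with $V+S=M$ yields a $2\times 2$ linear system, which I would solve explicitly as
\begin{equation*}
S \;=\; T-6M, \qquad V \;=\; 7M-T,
\end{equation*}
showing that both quantities are fully determined by the data $M$ and $T$.

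The main obstacle, such as it is, is mostly notational rather than mathematical: one must be careful that the statement ``$\ddagger$ is solved when the ancillary variables all vanish'' is interpreted clause-wise, i.e.\ that the dichotomy above applies uniformly to every $k$, which is precisely what makes the contributions additive. I would also briefly remark that the formulas $S=T-6M$ and $V=7M-T$ are consistent (both lie in $[0,M]$) exactly when $6M\le T\le 7M$, which is automatic in this regime, and that the argument does not require $\dagger$ itself to be satisfiable --- only that the converted problem $\ddagger$ has been solved with the stated ancillary configuration. The generalisation to arbitrary $d^k$ requires additionally tracking Case~1 with $d^k=1$ (which contributes $4$ rather than $6$) and would be handled in the next theorem.
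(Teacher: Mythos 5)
Your proposal is correct and follows essentially the same route as the paper: read off the $d^k=0$ column of the satisfactory table to get the $7$-versus-$6$ dichotomy, form the equation $\mathfrak{S}=7\mathcal{S}+6\mathcal{V}$ together with $\mathcal{V}+\mathcal{S}=M$, and solve the resulting $2\times 2$ system. Your explicit closed forms $\mathcal{S}=\mathfrak{S}-6M$ and $\mathcal{V}=7M-\mathfrak{S}$ are a slightly more direct finish than the paper's detour through the general integer solutions of the Diophantine equation, but the argument is the same.
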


\begin{proof}
Suppose $\ddagger$ is solved when the ancillary variables all vanish, \textit{i.e.} $d^k = 0$ for all $k = 1,\ldots, M$. Let $\mathfrak{S} \in \mathbb{N}_{\geq 0}$ denote the total number of satisfied clauses of $\ddagger$, and let $\mathcal{V}, \mathcal{S} \in \mathbb{N}_{\geq 0}$ denote the number of violated and satisfied clauses of $\dagger$ respectively. We have the following linear Diophantine equation: 
\begin{align}
\label{eqn:d0LDE}
    \mathfrak{S} & = 6 \cdot \mathcal{V} + 7 \cdot \mathcal{S}.
\end{align}
Since $7$ is prime, this equation has \textit{integer} solutions and can be solved completely by Euclidean Algorithm. Hence, there exists $\hat{\mathcal{V}}, \hat{\mathcal{S}} \in \mathbb{Z}$ such that the \textit{integer} solutions of Eq. \eqref{eqn:d0LDE} take the form: 
\begin{align}
\label{eqn:d0LDE:soln}
    \mathcal{V} = \hat{\mathcal{V}} - 7 \cdot n; & \quad \mathcal{S} = \hat{\mathcal{S}} + 6 \cdot n, \quad \forall n \in \mathbb{Z}.
\end{align}
Equivalently and more explicitly, since $\mathcal{V} + \mathcal{S} = M$ by construction, $\mathcal{V},\mathcal{S} \in \mathbb{N}_{\geq 0}$ are the \textit{non-negative integer} solutions of the following linear system, whose integrality and existence is due to Eq. \eqref{eqn:d0LDE:soln}:
\begin{align*}
    \begin{bmatrix}
        6 & 7 \\
        1 & 1
    \end{bmatrix} \cdot 
    \begin{bmatrix}
        \mathcal{V} \\
        \mathcal{S}
    \end{bmatrix} =
    \begin{bmatrix}
        \mathfrak{S} \\
        M
    \end{bmatrix} .
\end{align*}
\end{proof}

In the special case where the converted Max $2$-SAT problem is solved and all ancillary variables are $1$, the number of violated and satisfied clauses in the original $3$-SAT problem can be determined in two distinct ways. The first approach uses a similar argument as the proof of Theorem~\ref{thm:retrieve:d0} and employs a combinatorial argument using a linear Diophantine equation, which leads to an under-determined linear system. Since the number of satisfied and violated clauses are bounded integers, it is sufficient to enumerate the finite set of possible cases to determine the number of satisfied and violated clauses in the original $3$-SAT problem. To obtain an exact solution through a combinatorial method, we introduce additional information by assuming that we can count the total number of satisfied clauses in the converted Max $2$-SAT problem from Case 1 of Table~\ref{table.sat_vio_table}. With this additional information, the number of satisfied and violated clauses in the original $3$-SAT problem can be fully retrieved in a combinatorial manner. The above argument is summarize as follows:

\begin{corollary}
\label{thm:retrieve:d1}
Let $\ddagger := \bigwedge_{k=1}^{M} \Omega'_k$ denote a Max 2-SAT problem converted from a 3-SAT problem $\dagger := \bigwedge_{k=1}^M \Omega_k$. Suppose $\ddagger$ is solved with all ancillary variables equal to 1, and the total number of satisfied clauses in $\ddagger$ is known. By enumeration, the number of violated and satisfied clauses in $\dagger$ can be determined. Furthermore, if the number of satisfied clauses corresponding to Case 1 of $\ddagger$ is also known, then the exact number of violated and satisfied clauses in $\dagger$ can be retrieved.
\end{corollary}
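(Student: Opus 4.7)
The plan is to adapt the Diophantine approach of Theorem \ref{thm:retrieve:d0} to the $d^k = 1$ regime. Consulting Table \ref{table.sat_vio_table}, fixing every ancillary variable to $1$ partitions the $M$ converted clauses into three classes: Case 1 (violating $\Omega_k$, contributing $4$ satisfied clauses to $\ddagger$), Case 2 (satisfying $\Omega_k$, contributing $6$), and Case 3 (satisfying $\Omega_k$, contributing $7$). Writing $c_1, c_2, c_3 \in \mathbb{N}_{\geq 0}$ for the respective populations, the known values of $M$ and $\mathfrak{S}$ yield the linear system
\begin{align*}
c_1 + c_2 + c_3 &= M, \\
4c_1 + 6c_2 + 7c_3 &= \mathfrak{S},
\end{align*}
with $\mathcal{V} = c_1$ and $\mathcal{S} = c_2 + c_3$ by construction.

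For the first assertion, this is an under-determined $2 \times 3$ integer system with each $c_i$ constrained to the finite range $[0,M]$. Subtracting four times the first equation from the second reduces it to the single Diophantine equation $2c_2 + 3c_3 = \mathfrak{S} - 4M$, which admits only finitely many non-negative integer solutions and can be fully enumerated (for instance by sweeping $c_3$ over its bounded admissible range and solving for $c_2$, subject to a parity check); each such solution determines $c_1 = M - c_2 - c_3$ and hence a candidate pair $(\mathcal{V},\mathcal{S})$. Enumeration therefore produces a finite, explicitly computable list of possibilities for $(\mathcal{V},\mathcal{S})$.

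For the second assertion, observe that each Case 1 clause contributes exactly $4$ satisfied clauses to $\ddagger$. Thus if the total Case 1 contribution $\mathfrak{S}_1$ is known, then $c_1 = \mathfrak{S}_1 / 4$ and consequently $\mathcal{V} = c_1$, $\mathcal{S} = M - c_1$ are determined exactly.

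The principal conceptual obstacle, relative to Theorem \ref{thm:retrieve:d0}, is precisely that the partition now has three classes rather than two: the underlying system is under-determined, and distinct non-negative integer triples $(c_1,c_2,c_3)$ can share the same $(M,\mathfrak{S})$, so $\mathfrak{S}$ alone cannot pin down $(\mathcal{V},\mathcal{S})$ uniquely. Identifying $c_1$ --- equivalently, the Case 1 contribution $\mathfrak{S}_1$ --- as the minimal extra statistic that collapses the candidate list is the key observation; once in hand, exact retrieval is immediate from the two counting identities above.
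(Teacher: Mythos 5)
Your proposal is correct and follows essentially the same route as the paper's proof: the same three-way partition of the $M$ clauses according to the rows of Table \ref{table.sat_vio_table} with weights $4$, $6$, $7$, the same under-determined integer system enumerated over a bounded non-negative range, and the same key observation that knowledge of the Case 1 count pins down $\mathcal{V}$ exactly. The only (harmless) difference is that once $\mathcal{V}$ is known you conclude $\mathcal{S} = M - \mathcal{V}$ directly, whereas the paper additionally solves a $2\times 2$ linear system to recover the finer split into $\mathcal{S}_{1,2}$ and $\mathcal{S}_{1,3}$ before summing them.
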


\begin{proof}
    See Appendix \ref{app.pf.cor}.
\end{proof}

Notice that in the general case the clauses of the converted Max $2$-SAT problem $\ddagger$ can be partitioned into two distinctive parts, where the ancillary is either $0$ or $1$.\footnote{In practice, this can be directly counted from the final solution.} We can, therefore, apply Theorem \ref{thm:retrieve:d0} and Corollary \ref{thm:retrieve:d1} separately to both parts and recombine the results. In particular, we now show that once $\ddagger$ is solved, the number of violated and satisfied clauses in the original 3-SAT problem can either be determined or retrieved under the conditions stated in Corollary~\ref{thm:retrieve:d1}.

\begin{theorem}
\label{thm:retrieve:dall}
Let $\ddagger := \bigwedge_{k=1}^{M} \Omega'_k$ denote a Max $2$-SAT problem converted from $3$-SAT problem $ \dagger := \bigwedge_{k=1}^M \Omega_k$. Suppose $\ddagger$ is solved and the total number of satisfied clauses of $\ddagger$ is counted for both cases when the ancillary variables is $0$ or $1$. By enumeration, the number of violated and satisfied clauses in $\dagger$ can be determined. Furthermore, if the number of satisfied clauses corresponding to Case 1 of $\ddagger$ is also known, then the exact number of violated and satisfied clauses in $\dagger$ can be retrieved.
\end{theorem}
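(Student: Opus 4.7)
The plan is to reduce to the two already-handled regimes by partitioning the clauses of $\ddagger$ according to the solved values of the ancillary variables. Let $K_i = \{k : d^k = i\}$ and $M_i = |K_i|$ for $i \in \{0,1\}$, and let $\mathfrak{S}_0, \mathfrak{S}_1$ denote the number of satisfied clauses of $\ddagger$ restricted to $\bigwedge_{k \in K_0}\Omega'_k$ and $\bigwedge_{k \in K_1}\Omega'_k$; both are read off from the known solution by hypothesis. Because the partition is exhaustive and disjoint and the assignment of the original literals is the same in both blocks, the satisfied and violated clause counts of $\dagger$ decompose additively as $\mathcal{V} = \mathcal{V}_0 + \mathcal{V}_1$ and $\mathcal{S} = \mathcal{S}_0 + \mathcal{S}_1$, where the subscript indicates the block.

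First I would handle the $K_0$-block: every ancillary variable vanishes there, so Theorem~\ref{thm:retrieve:d0} applies verbatim and yields the unique non-negative integers $(\mathcal{V}_0, \mathcal{S}_0)$ satisfying $6\mathcal{V}_0 + 7\mathcal{S}_0 = \mathfrak{S}_0$ together with $\mathcal{V}_0 + \mathcal{S}_0 = M_0$. Next I would handle the $K_1$-block, where every ancillary variable equals $1$ and Corollary~\ref{thm:retrieve:d1} applies directly: the under-determined Diophantine system involving the Case~1, Case~2, and Case~3 counts admits only finitely many non-negative integer solutions bounded by $M_1$, which can be enumerated to produce the candidate values of $(\mathcal{V}_1, \mathcal{S}_1)$. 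If the Case~1 contribution to $\mathfrak{S}_1$ is additionally known, then $\mathcal{V}_1$ is read off directly, and substituting back leaves a full-rank $2\times 2$ linear system for the Case~2 and Case~3 parts of $\mathcal{S}_1$; Corollary~\ref{thm:retrieve:d1} therefore pins down the exact pair $(\mathcal{V}_1, \mathcal{S}_1)$.

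Recombining the two blocks via $\mathcal{V} = \mathcal{V}_0 + \mathcal{V}_1$ and $\mathcal{S} = \mathcal{S}_0 + \mathcal{S}_1$ completes the argument. The main obstacle is essentially bookkeeping: one must verify that the counted Case~1 clauses of $\ddagger$ restrict cleanly to the $K_1$-block (since Case~1 clauses with $d^k=0$ contribute differently to $\mathfrak{S}_0$ and must not be double-counted), so that the hypothesis of Corollary~\ref{thm:retrieve:d1} can be invoked on the $K_1$-block without ambiguity. Once this decomposition of the data $\mathfrak{S}_0, \mathfrak{S}_1$ and the Case~1 contribution is in place, everything else reduces to direct application of Theorem~\ref{thm:retrieve:d0} and Corollary~\ref{thm:retrieve:d1} to the two sub-problems combined with the additive decomposition of $\mathcal{V}$ and $\mathcal{S}$.
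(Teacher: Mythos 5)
Your proposal is correct and follows essentially the same route as the paper's proof: partition the clauses by the solved value of the ancillary variable, apply Theorem~\ref{thm:retrieve:d0} to the $d^k=0$ block and Corollary~\ref{thm:retrieve:d1} to the $d^k=1$ block, and recombine additively via $\mathcal{V} = \mathcal{V}_0 + \mathcal{V}_1$ and $\mathcal{S} = \mathcal{S}_0 + \mathcal{S}_1$. Your closing remark about verifying that the Case~1 count restricts cleanly to the $d^k=1$ block without double counting is precisely the disjointness check the paper carries out in its case-by-case table.
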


\begin{proof}
    See Appendix \ref{app.pf.thmall}.
\end{proof}

\section{Numerical Experiments}
\label{sec:experiment}
We now compare the performance of QUBO solvers \cite{DunningEtAl2018} (including Ising solvers \cite{goto}) against Relaxable Cardinality Constraints (RC$2$) \cite{rc2} --- CDCL-based state-of-the-art Max SAT solver.\footnote{RC$2$ won the \textit{weighted} and \textit{unweighted} categories of the Max SAT Evaluation competition in $2018$ \cite{competition2018} and $2019$ \cite{competition2019}.} Specifically, we focus on the accuracy, as measured by the number of violated clauses. The QUBO (and Ising) solvers use the gadget-based \cite{GAREY1976237} conversion methods outlined in Section \ref{sec:sec2} (and Appendix \ref{appendixb}), whereas RC$2$ solves the original $3$-SAT problem.

In Section \ref{sec:sub1} we used a set of four publicly available benchmark instances with clause density in the range of $1.6 \leq \rho \leq 6$, where $\rho = M/N$ denote the clause density \cite{hard1,hard2} with $M$ denoting the number of clauses and $N$ denoting the number of literals. In Section \ref{sec:sub2}, on the other hand, we used randomly generated instances (see Section \ref{sec.code} for details) and simulated two distinct scenarios. First, a \textit{generic} scenario with varying clause density from $\rho=0.5$ to $6$ (see Fig. \ref{fig:figure2}), then followed by a simulation in the \textit{hard} regions, as defined in the \cite{mezard, mezard2} (see Fig. \ref{fig:figure3}.).

For the QUBO solvers, we evaluated all $27$ heuristics listed in \cite{DunningEtAl2018} and identified the ``best" performing one, in this case the genetic algorithm with local search heuristics. Here, ``best" is defined as the heuristic that produces the smallest median value. In instances where several heuristics yield identical median values, the one with the smallest minimum value is selected. For the Ising solver, we implemented ballistic simulated bifurcation (bSB) dynamics, utilizing the same parameter settings described in \cite{goto}, but extending the number of time steps $\Delta_{t}$ to $20$ and maintaining a fixed $5000$ iterations. For performance comparison, we implemented the same selection strategy as outlined above. For RC$2$, default parameters were employed. All results were generated in Ubuntu $20.04.6$ LTS on an Intel Core i$7$-$1185$G$7$ $3.0$GHz processor with $16$GB of RAM. 

\subsection{Benchmark Comparison}
\label{sec:sub1}
We tested four sets of benchmark $3$-SAT instances from the Satisfiability Library \cite{benchmark} --- uf50-218, AIM, PRET, and DUBIOS.
\begin{itemize}
    \item uf50-218 --- we randomly selected $200$ instances out of $1000$ \textit{satisfiable} ones in total. All of the tested instances are of size $N=50$ variables and $M=218$ clauses, corresponding to $\rho=4.4$.
    \item AIM --- we tested all $16$ \textit{satisfiable} instances of size $N=50$ variables, where the number of clauses varies from $M=80$ to $300$; corresponding to clause density from $\rho=1.6$ to $6$.
    \item PRET --- we tested all $4$ \textit{unsatisfiable} instances of size $N=60$ variables, $M=160$ clauses, and $\rho = 2.7$.
    \item DUBIOS --- we tested $7$ \textit{unsatisfiable} instances with varying sizes, from $N=60$ to $78$ variables and from $M=160$ to $208$ clauses (for all instances $\rho = 2.7$).
    
\end{itemize}
The comparison of the performance of QUBO, Ising, and RC$2$ solvers across the four sets of instances is shown in Fig. \ref{fig:combined}.

\begin{figure}[!ht]
\centering
\includegraphics[width=\linewidth]{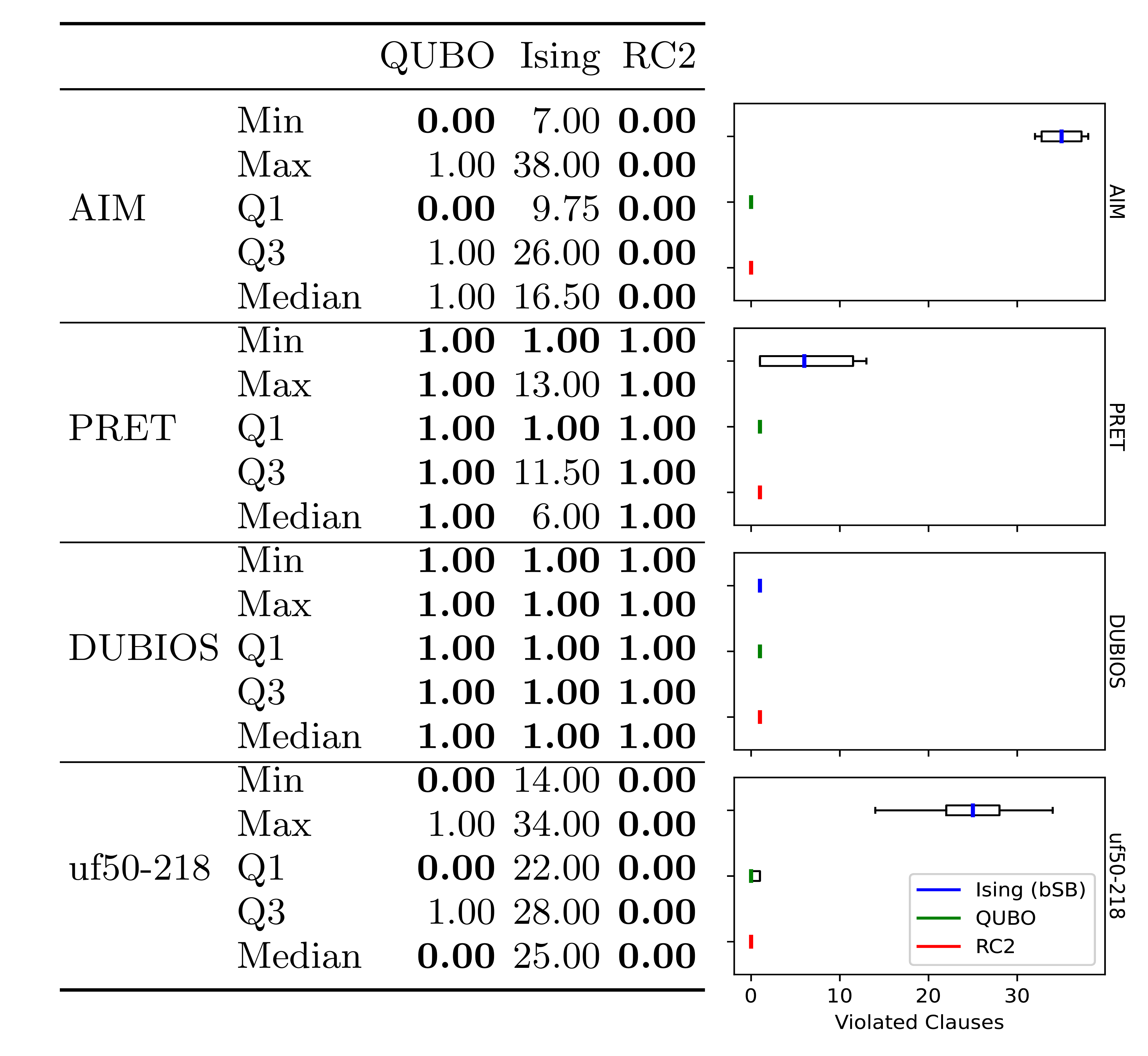}
\caption{\textbf{Benchmark.} On the left side is a summary of the statistics of the performed numerical simulations. The highlighted values represent the best solutions for each of the four sets. The Q1, Q3, and Median values were calculated using the \texttt{percentile} function from \texttt{NumPy} (in \texttt{Python}). On the right side are the boxplots, corresponding to the instance with largest $N$ from each set. In case two instances from the same set have equal $N$, we selected the one with higher clause density $\rho$.}
\label{fig:combined}
\end{figure}

\mycomment{
\begin{figure}[!ht]
\centering
\subfloat[]{
    \raisebox{10mm}{\adjustbox{valign=c,width=0.527\linewidth}{\begin{tabular}{llrrr}
        \toprule
        & & QUBO & Ising & RC2 \\
        \midrule
        \multirow{5}{*}{AIM} & Min & \textbf{0.00} & 7.00 & \textbf{0.00} \\
        & Max & 1.00 & 38.00 & \textbf{0.00} \\
        & Q1 & \textbf{0.00} & 9.75 & \textbf{0.00} \\
        & Q3 & 1.00 & 26.00 & \textbf{0.00} \\
        & Median & 1.00 & 16.50 & \textbf{0.00} \\
        \cline{1-5}
        \multirow{5}{*}{PRET} & Min & \textbf{1.00} & \textbf{1.00} & \textbf{1.00} \\
        & Max & \textbf{1.00} & 13.00 & \textbf{1.00} \\
        & Q1 & \textbf{1.00} & \textbf{1.00} & \textbf{1.00} \\
        & Q3 & \textbf{1.00} & 11.50 & \textbf{1.00} \\
        & Median & \textbf{1.00} & 6.00 & \textbf{1.00} \\
        \cline{1-5}
        \multirow{5}{*}{DUBIOS} & Min & \textbf{1.00} & \textbf{1.00} & \textbf{1.00} \\
        & Max & \textbf{1.00} & \textbf{1.00} & \textbf{1.00} \\
        & Q1 & \textbf{1.00} & \textbf{1.00} & \textbf{1.00} \\
        & Q3 & \textbf{1.00} & \textbf{1.00} & \textbf{1.00} \\
        & Median & \textbf{1.00} & \textbf{1.00} & \textbf{1.00} \\
        \cline{1-5}
        \multirow{5}{*}{uf50-218} & Min & \textbf{0.00} & 14.00 & \textbf{0.00} \\
        & Max & 1.00 & 34.00 & \textbf{0.00} \\
        & Q1 & \textbf{0.00} & 22.00 & \textbf{0.00} \\
        & Q3 & 1.00 & 28.00 & \textbf{0.00} \\
        & Median & \textbf{0.00} & 25.00 & \textbf{0.00} \\
        \bottomrule
    \end{tabular}}
    }
    \label{table:results_univ}
}
\hspace*{0.001\linewidth}
\subfloat[]{\raisebox{0mm}{\includegraphics[width=0.413\linewidth]{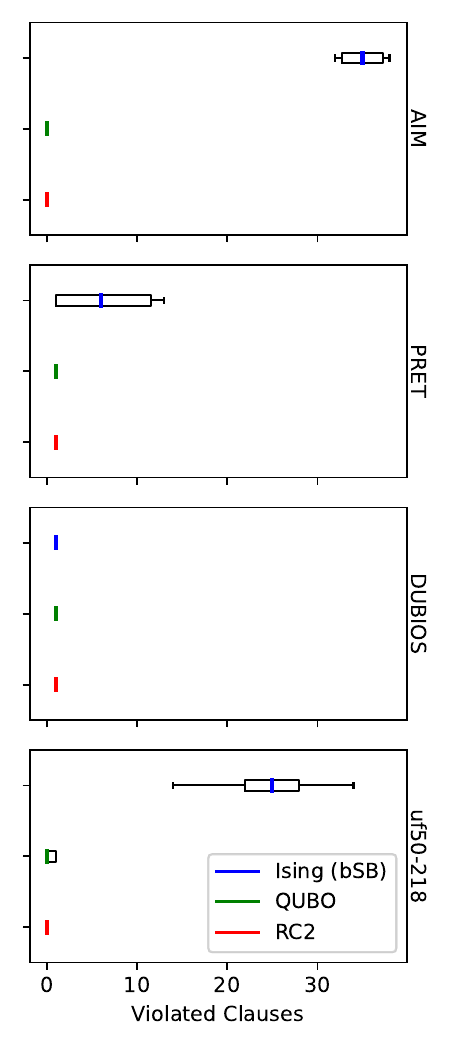}}\label{fig:plot1}}
\caption{\textbf{Benchmark.} (a) Summary of the statistics of the performed numerical simulations. The highlighted values represent the best solutions for each of the four sets. The Q1, Q3, and Median values were calculated using the \texttt{percentile} function from \texttt{NumPy} (in \texttt{Python}). (b) The boxplots, corresponding to the largest (number of literals) instance from each set. In case two instances from the same set have equal number of literals, we selected the one with higher clause density.}
\label{fig:combined}
\end{figure}} 
Evidently, QUBO matches the accuracy of RC$2$ for almost all of the tested instances. For the Ising solver, however, we observe that it predominantly produces trivial solutions, such as all unnegated or negated variables, even after parameter tuning. Nevertheless, in certain instances from the PRET and DUBIOS sets, Ising achieves accuracy comparable to that of QUBO and RC$2$.  {Yet}, this only occurs because, in these specific cases, the trivial solutions coincide with the actual solutions.

\subsection{Clause Density}
\label{sec:sub2}
In practice for many combinatorial optimization problems, large performance discrepancies occur when problem parameters are changed \cite{hard1,hard2}. Specifically for random $3$-SAT, where the ratio between the number of clauses and literals is known to determine the hardness of the problem, this is reflected by the observation that varying the number of clauses induces phase transitions \cite{pahsetransition}; where for $\rho>\rho_{c}$ (with $\rho_{c}=4.26$ being a threshold, as computed in \cite{mezard,mezard2}) generic problems are no longer SAT \cite{hard3}. To that end, in this Section we  {provide} additional insight into how the clause density $\rho$ affects the performance of QUBO solvers and RC$2$. 

 {We begin with the generic scenario.} The simulations are conducted on randomly-generated instances of sizes $N=30,50,60,70$ variables. Here, each clause is constructed from exactly three distinct variables, picked uniformly at random, where each variable has $1/2$ probability of being negated. The clause density for each $N$ varies from $\rho=0.5$ to $6$ with a step size of $0.5$. For each $N$, we generate $10$ instances for every value of $\rho$. The instances are chosen so that we can finish the numerical simulations within a reasonable time. Indeed, as we can observe from Fig. \ref{fig:figure2}, as $\rho$ increases in the given range, the problem exhibits a typical easy-to-hard pattern for all tested solvers, affecting their performance.

\begin{figure}[!ht]
\centering
\includegraphics[width=\linewidth]{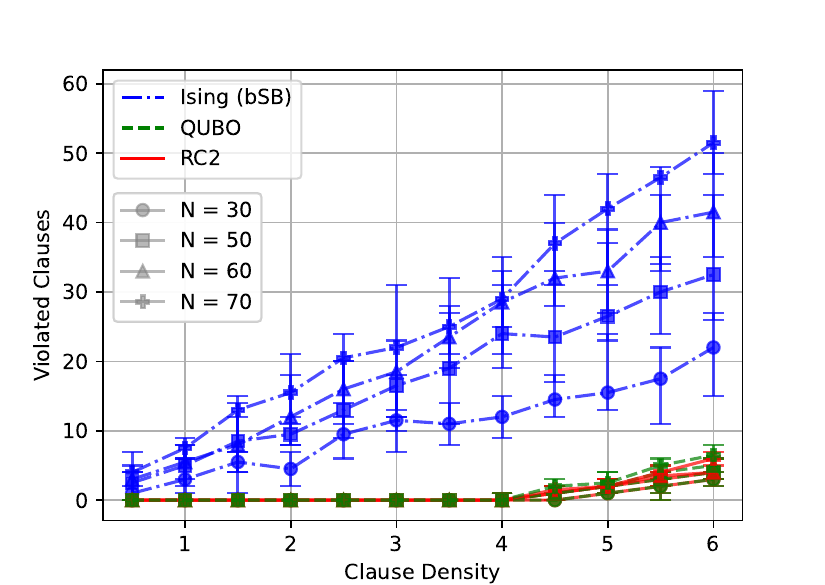}
\caption{The number of violated clauses as a function of the clause density for four different instance sizes $N$. Each datapoint corresponds to 10 randomly-generated instances with 3 trials per instance.}
\label{fig:figure2}
\end{figure}

In alignment with our previous results, we observe that, even for the instances  {with} high clause density $\rho$, QUBO consistently matches the accuracy of RC2 across all sizes. What makes the results particularly appealing is the fact that popular alternative approaches, \textit{e.g.} quantum approximate optimization algorithm (QAOA) \cite{qaoa} suffer from so-called \textit{reachability deficit} (see, \textit{e.g.}, \cite{deficit}) when dealing with $3$-SAT problems with high clause density $\rho$. Typically, this is attributed to the amount of frustration in the Ising variables in the cost Hamiltonian that encodes the problem. Consequently, QAOA has significant hardware overhead (especially in the hard region of the problem where the easy-to-hard pattern emerges), making it unsuitable for current NISQ devices. Despite continued attempts \cite{qaoa1,qaoa2,qaoa3} to improve the capacity of QAOA to handle hard 3-SAT instances on near-term devices, its issues remain; further motivating the use of available QUBO solvers in the current hardware landscape.

The Ising solver, on the other hand, similar to subsection \ref{sec:sub1}, cannot solve the randomly generated instances and produces mostly trivial solutions, \textit{i.e.} all variables are either unnegated or negated. Unlike the results shown in Fig. \ref{fig:combined}, however, here the trivial solutions do not correspond to the actual solutions, and so the number of violated clauses is strictly bigger than zero even for $\rho=0.5$. Nonetheless, its trivial solutions do, in fact, satisfy \textit{some} of the clauses, thus reducing the overall number of the violated clauses.

{We now turn to the scenario in the \textit{hard} regime. Besides the $\rho_{c}=4.26$ threshold, the authors of \cite{mezard,mezard2} also discovered the existence of a particular region $3.92<\rho<\rho_{c}$. In this particular region, the problems are notoriously difficult to solve due to an increase in the number of metastable states (for a thorough examination of this phenomena in spin glass models, see \cite{spinglass}), even though they remain SAT. The simulations are conducted on randomly-generated instances of $N=70$ variables, where similar to the generic case, each clause is constructed from exactly three distinct variables (picked uniformly at random), with each having $1/2$ probability of being negated. The clause density is now restricted to the aforementioned region $3.92<\rho<\rho_{c}$, where the number of clauses increases from $260$ to $310$.\footnote{Notice that we also included small neighborhood before and after the mentioned region.} Likewise, $10$ instances are generated for every value of $\rho$.}

\begin{figure}[!ht]
\centering
\includegraphics[width=\linewidth]{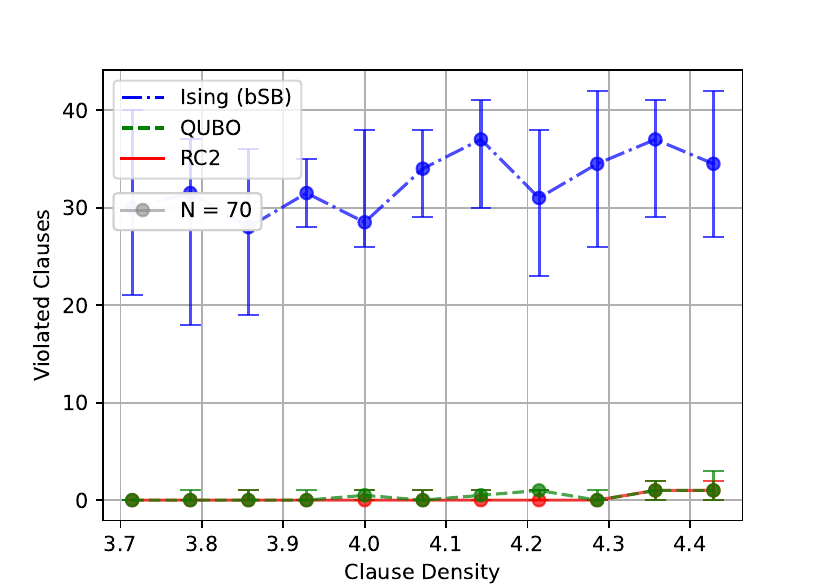}
\caption{The number of violated clauses as a function of the clause density for $N=70$, restricted to the \textit{hard} intermediate region, derived in \cite{mezard,mezard2}. Each datapoint corresponds to 10 randomly-generated instances with 3 trials per instance.}
\label{fig:figure3}
\end{figure}
 {In agreement with the previous results, we can see from Fig. \ref{fig:figure3} that even in this particular region the QUBO solver matches the accuracy of RC2, while the Ising solver still outputs trivial solutions, where all variables are unnegated or negated.}

\section{Conclusions}
This work paves the way towards establishing QUBO solvers as an alternative platform to CDCL-based solvers for tackling $3$-SAT problems, and it is the first performance-oriented comparison between them. To that end, we first demonstrated a $2$-step technique (using the $(7,10)$-gadget \cite{GAREY1976237}) for mapping $3$-SAT problems of arbitrary size to the QUBO form via an intermediate step of converting them to their corresponding Max $2$-SAT formulations, and then deriving the corresponding QUBO cost function. The conversion is appealing as it allows us to retrieve the solution of the original $3$-SAT problem from the QUBO solution. This is necessary to enable the use of current QUBO (and Ising) solvers for $3$-SAT problems, see Appendix \ref{appendix} and \ref{appendixb}.

We showed rigorously that the number of both satisfied and violated clauses of the original hard $3$-SAT problem can be computed from the converted Max $2$-SAT problem. Through numerical simulations on several public benchmark instances, and randomly-generated instances within the well-known \textit{hard} region \cite{mezard, mezard2}, we demonstrated that available QUBO solvers can achieve state-of-the-art accuracy, matching that of RC$2$. Besides, in the current NISQ era, the results have direct utility with regard to solving $3$-SAT problems on available quantum annealing hardware, and facilitate the development of advanced digital QUBO solvers and heuristics.

\section{Data Availability}
\label{sec.code}
The entire source code and all of the raw data for the numerical analysis are openly available at \href{https://github.com/seashell-s/qubo-3sat}{https://github.com/seashell-s/qubo-3sat}.

\appendix
\section{3-SAT Hamiltonian does not have Ising quantization}
\label{appendix}

Here we show that the 3-SAT Hamiltonian does not have an Ising quantization, necessitating the conversion approaches, outlined in Section \ref{sec:sec2} and Appendix \ref{appendixb}, to utilize QUBO and Ising solvers for tackling 3-SAT problems.\\
To ease the notation, for this Section we omit the upper index of the clause and let $S := \left\{\sigma_i \right\}_{i=1}^N$ denote the Ising-spin literals of a 3-SAT problem with $M$ clauses $\left\{C_\mu\right\}_{\mu=1}^{M}$. That is for the following discussion we have $\sigma_i \in \left\{\pm 1\right\}$ and Eq. 2 of \cite{barthel} reads:

\begin{align}
\label{eqn.barthel}
    \mathcal{H}=\frac{M}{8} - \sum_{i=1}^N H_i \sigma_i - \sum_{i<j}T_{ij}\sigma_i \sigma_j - \sum_{i<j<k}J_{ijk} \sigma_i \sigma_j \sigma_k , 
\end{align}
where $c_{\mu,i} = 1$ if $\sigma_i$ is in clause $C_\mu$ and $c_{\mu,i} = -1$ otherwise. $H_i = \frac{1}{8}\sum_{\mu} c_{\mu,i}, T_{ij} = -\frac{1}{8}\sum_{\mu}c_{\mu,i}\cdot c_{\mu,j}, J_{ijk} = \frac{1}{8}\sum_{\mu}c_{\mu,i}\cdot c_{\mu,j}\cdot c_{\mu,k}$
\\

\begin{theorem}
\label{thm.noquad}
Quadratic penalty formulation of Eq. \eqref{eqn.barthel} is not possible for Ising-spin literals $\sigma_i \in \left\{\pm 1\right\}$. 
\end{theorem}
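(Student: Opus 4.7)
The plan is to argue by uniqueness of the multilinear (Walsh--Fourier) expansion on the Ising cube $\{\pm 1\}^N$. The key algebraic fact is that, because $\sigma_i^2=1$ for every $i$, any polynomial in $\sigma_1,\dots,\sigma_N$ reduces after absorbing squares to a multilinear polynomial of the form $\sum_{S\subseteq[N]} a_S \chi_S(\sigma)$ with $\chi_S(\sigma):=\prod_{i\in S}\sigma_i$. There are exactly $2^N$ parity characters $\chi_S$, they are orthogonal under the uniform measure on $\{\pm 1\}^N$ (so linearly independent), and they therefore form a basis of the $2^N$-dimensional space of real-valued functions on the cube. Hence every such function admits a \emph{unique} multilinear expansion.

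Next I would observe that the right-hand side of Eq.~\eqref{eqn.barthel} is already written in this multilinear basis, so the coefficients $-H_i$, $-T_{ij}$, $-J_{ijk}$ are precisely the Fourier coefficients $\hat{\mathcal{H}}(S)$ for $|S|=1,2,3$, respectively. A \emph{quadratic} penalty formulation of $\mathcal{H}$ would mean that $\mathcal{H}$ admits a representation of the form $\tilde c + \sum_i \tilde H_i\sigma_i + \sum_{i<j}\tilde T_{ij}\sigma_i\sigma_j$, that is a multilinear polynomial of degree at most two. By uniqueness of the multilinear expansion this would force $J_{ijk}=0$ for every triple $i<j<k$.

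To conclude I would exhibit a concrete 3-SAT instance violating this: take a single clause $C_1=(\sigma_i\lor\sigma_j\lor\sigma_k)$ with all three literals unnegated. Then $c_{1,i}=c_{1,j}=c_{1,k}=+1$, so $J_{ijk}=\tfrac{1}{8}\neq 0$, contradicting the vanishing of all triple coefficients demanded by a quadratic formulation. Hence no such formulation exists, proving Theorem~\ref{thm.noquad}.

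The main obstacle is conceptual rather than computational: one must fix the scope of the statement, since if ancillary Ising spins were permitted then any cubic monomial $\sigma_i\sigma_j\sigma_k$ could be reduced to a quadratic expression via a standard gadget. The theorem must therefore be read as a statement about the \emph{original} $N$ literals, and this is exactly the regime in which Walsh--Fourier uniqueness bites; once that reading is in place the argument is a short coefficient-matching check, and the explicit single-clause witness above makes the non-reducibility entirely concrete.
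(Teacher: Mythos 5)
Your Fourier-uniqueness argument is correct for what it proves: on $\{\pm 1\}^N$ the parity characters form an orthogonal basis, so a function whose cubic Fourier coefficient $J_{ijk}=\tfrac18$ is nonzero (your single all-positive clause) cannot coincide with any degree-$\le 2$ multilinear polynomial in the \emph{original} literals. But that is the ancilla-free statement, and the scoping move you use to get there --- ``the theorem must be read over the original $N$ literals, since with ancillary spins any cubic monomial could be reduced via a standard gadget'' --- is exactly where you diverge from the paper. Theorem~\ref{thm.noquad} as the paper proves it \emph{does} admit ancillary spins $y_{jk}$; what it constrains is the penalty: conditions \ref{p.1} and \ref{p.2} demand a non-negative polynomial of degree at most two that vanishes \emph{only} on the configurations with $y_{jk}=\sigma_j\sigma_k$. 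The paper's entire proof lives there: imposing $p_k=0$ on the four consistent configurations forces every genuinely linear and bilinear coefficient to vanish, and the survivor $Gy_{jk}^2+H\sigma_j^2+I\sigma_k^2+J$ with $J=-G-H-I$ is identically zero on the cube because $\sigma^2=1$, violating \ref{p.2}. Your proposal never engages with this mechanism, so it establishes an adjacent (and strictly weaker) claim, not the one the paper argues.

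Your escape hatch also deserves scrutiny: the ``standard gadget'' (Rosenberg-type, transported to $\pm 1$ spins by the affine change of variables) enforces $z=x_i\wedge x_j$ for Boolean variables, not $y_{jk}=\sigma_j\sigma_k$ for spins, so it does not satisfy \ref{p.2} and does not make the paper's statement vacuous --- though it does show the theorem must be read narrowly, as the impossibility of a quadratic penalty enforcing the \emph{spin product}, rather than as a blanket impossibility of quadratization with ancillas. If you want to repair your approach rather than abandon it, your orthogonality observation adapts nicely to the paper's actual key step: any multilinear $f$ of degree $\le 2$ on $\{\pm1\}^3$ that vanishes on the subgroup $\{y_{jk}\sigma_j\sigma_k=1\}$ satisfies $\hat f(\emptyset)=-\hat f(\{j,k,y\})=0$, so $f$ has mean zero over the cube; if $f$ is also non-negative it must vanish identically, which is precisely the contradiction with \ref{p.2} that the paper reaches by coefficient matching in Table~\ref{table.quad}. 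As written, however, the proposal has a genuine gap: it proves non-representability without ancillas instead of the impossibility of the quadratic penalty formulation.
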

\begin{proof}
We begin by labeling the components of Eq. \eqref{eqn.barthel} as follows:
\begin{align*}
    \mathcal{H}= \frac{M}{8} - \underbrace{\sum_{i=1}^N H_i \sigma_i}_{\mathcal{L}} - \underbrace{\sum_{i<j}T_{ij}\sigma_i \sigma_j}_{\mathcal{Q}} - \underbrace{\sum_{i<j<k}J_{ijk} \sigma_i \sigma_j \sigma_k}_{\mathcal{C}} .
\end{align*}

The first three terms of $\mathcal{H}$ are at most quadratic, therefore it suffices to show that the cubic term $\mathcal{C}$ does not admit a quadratization over $\pm1$ Ising spins. Suppose by contrary that $\mathcal{H}$ (equivalently $\mathcal{C}$) can be expressed as a quadratic polynomial over $\pm1$ Ising spins with, at most, a quadratic penalty term. That is, for each $j,k = 1,\ldots, N$, we can define an ancillary variable  $y_{jk} \in \left\{-1,1\right\}$, a constant $\mu_k \in \mathbb{R}_{\geq 0}$, and a function $p_k: S\times S\times \left\{\pm 1\right\} \rightarrow \mathbb{R}$ such that:
\begin{enumerate}[label=\textbf{P.\arabic*}]
        \item \label{p.1} $p_k$ is non-negative polynomial with $\operatorname{deg}(p_k) \leq 2$
        \item \label{p.2} $p_k$ vanishes \textbf{only} when $\sigma_j \sigma_k - y_{jk} = 0$ for each $k= 1,\ldots,N$. 
\end{enumerate}
Eq. \eqref{eqn.barthel} thus becomes: 
\begin{align}
    \mathcal{H}= \frac{M}{8} &- \mathcal{L} - \mathcal{Q}  \\ \nonumber
    & - \left( \sum_{i<j<k}J_{ijk} \sigma_i y_{jk} +
    \underbrace{\sum_{k=1}^N \mu_k p_k \left( \sigma_j, \sigma_k, y_{jk} \right)}_{\operatorname{Penalty}}\right).
\end{align}
By \ref{p.1}, $p_k$ is either linear or quadratic. The only linear polynomial (over the ancillary variable) that satisfies \ref{p.2} is $\sigma_j \sigma_k = y_{jk}$, which violates the non-negativity condition in \ref{p.1} when $\left(\sigma_j, \sigma_k, y_{jk}\right) = \left(1,-1,1\right)$. Therefore $p_k$ must be quadratic over all variables and there exists constants $A,B,C,D,E,F,G,H,I,J \in \mathbb{R}$ such that $p_k$ can be expanded into the following form:
\begin{align}
\label{eqn.quadf}
    p_k &= B\sigma_j + C\sigma_k + Dy_{jk} \\
    & \quad + A\sigma_j\sigma_k + E\sigma_jy_{jk} + F\sigma_ky_{jk} \nonumber \\ 
    & \quad + Gy_{jk}^2 + H\sigma_j^2 + I\sigma_k^2 + J  . \nonumber
\end{align}
In Table \ref{table.quad}, we summarize the possible combinations of $\left(\sigma_j, \sigma_k, y_{jk}\right)$ and the corresponding coefficients of $f$ such that both \ref{p.1} and \ref{p.2} are satisfied. 

\begin{table}[ht!]
\begin{center}
\begin{tabular}{|c|c|c|c|}
\hline
$\sigma_j$ & $\sigma_k$ & $y_{jk}$ & Constants when $p_k = 0$:                 \\ \hline
$1$     & $1$     & $1$        & $J = -A - B - C - D - E - F - G - H - I$ \\ \hline
$-1$    & $1$     & $-1$       & $J = A + B - C + D + E + F - H - H - I$  \\ \hline
$1$     & $-1$    & $1$        & $J = A - B + C + D + E - F - G - H - I$  \\ \hline
$-1$    & $-1$    & $-1$       & $J = -A + B + C - D - E + F - G - H - I$ \\ \hline
\end{tabular}
\end{center}
\caption{Choices of $\left(\sigma_j, \sigma_k, y_{jk}\right)$ and coefficients of $p_k$, such that both \ref{p.1} and \ref{p.2} are satisfied.}
\label{table.quad}
\end{table}
By \ref{p.2}, we must have that all four rows of the right-most column of Table \ref{table.quad} are satisfied simultaneously. Hence $ A = B= C= D= E= F = 0$, and Eq. \eqref{eqn.quadf} reduces to: 
\begin{align}
\label{eqn.reduced.quad}
    p_k  = Gy_{jk}^2 + H\sigma_j^2 + I\sigma_k^2 + J; \quad J = -G - H - I,
\end{align}
which vanishes for all $\left(\sigma_j, \sigma_k, y_{jk}\right) \in S \times S \times \left\{\pm 1\right\}$ and \ref{p.2} is violated. 

To see this, consider arbitrary Ising spin configuration $\left(\sigma_j, \sigma_k, y_{jk}\right) \in S \times S \times \left\{\pm 1\right\}$, Eq. \eqref{eqn.reduced.quad} then becomes:
\begin{align*}
    p_k\left(\sigma_j, \sigma_k, y_{jk}\right) & = G\cdot 1 + H\cdot 1 + I\cdot 1  + J \\ 
    & = G + H + I - G - H - I = 0.    
\end{align*}

This implies any quadratic penalty reduces to Eq. \eqref{eqn.reduced.quad} necessarily violates  \ref{p.2}. We therefore conclude that a quadratic penalty $p_k$ that satisfies both \ref{p.1} and \ref{p.2} cannot exist, contradicting our claim that $\mathcal{H}$ can be expressed at most quadratically  with such a penalty function and the proof is complete. 
\end{proof}

By Theorem \ref{thm.noquad}, it follows immediately that a quadratic Hamiltonian formulation of the 3-SAT problem (\textit{e.g.} as suggested by Eq. 4 of \cite{chancellor}) is \textbf{not possible} in the Ising spin variables $\sigma_i^z \in \left\{\pm 1 \right\}$. The referenced equation is included below for completeness with $N$ replacing $k$ as the end point of the summation, denoting the number of variables, to avoid confusion:
\begin{align*}
    \mathcal{H}_{\text{clause}}^{(2)} &= \underbrace{J\cdot \sum_{j<i}^N c(i) c(j) \sigma_i^z \sigma_j^z}_{\text{Quadratic term}} + \underbrace{h\cdot \sum_{i=1}^N c(i) \sigma_i^z }_{\text{Linear term}} \\
    &+ \underbrace{J^a \sum_{i=1}^N \sum_{j=1}^N c(i) \sigma_i^z \sigma_{j,a}^z}_{\text{Quad. coupled with aux from Cubic}} + \underbrace{\sum_{i=1}^N h_i^a \sigma_{i,a}^z}_{\text{aux. from Cubic}}
\end{align*}

\section{Max 2-SAT to Ising}
\label{appendixb}

Similar to section \ref{sec:2toqubo}, we must construct a problem Hamiltonian $H_{\ddagger}$ \cite{map} in order to utilize Ising solvers for Max $2$-SAT problems. Consider a Max $2$-SAT problem $\ddagger := \bigwedge_{k=1}^{M'} \Omega'_k$ with $M'$ clauses $\left\{ \Omega'_k\right\}_{k=1}^{M'} $and $N'$ literals $\left\{\sigma'_i\right\}_{i=1}^{N'}$. We now define a $H_\ddagger$ on $\ddagger$ over the search space $\mathbf{x} \in \left\{0,1\right\}^{N'}$, for which we minimize over with the goal to minimize the total number of violated clauses. We first consider the negation binary variable $\mathbf{y} \in \left\{0,1\right\}^{N'}$ of $\mathbf{x}$ as follows:
\begin{align*}
    y_j = 1 - x_j,\quad \operatorname{for} j = 1,\ldots,N'.
\end{align*}
We then map binary variables $\{y_{j}\}_{j=1}^{N'}$ to the $\pm1$ eigenstates $\{\overline{\sigma}_{j}^{k}\}_{j=1}^{N'}$ of the Pauli-$z$ operator:
\begin{equation}
\overline{\sigma}_{j}^{k} \ket{y_{j}} = (-1)^{y_{j}}\ket{y_{j}} .
\end{equation}

We then define the set of intermediate variables  $\{v_{j}^{k}\}$ over the $M'$ clauses of $\ddagger$ as in section \ref{sec:2toqubo}.

Now we are ready to translate each $2$-literal clause $\Omega_{k}'$ from the Max $2$-SAT problem $\ddagger$ into its corresponding $2$-local term $H_{\Omega_{k}'}$

\begin{equation}
H_{\Omega_{k}'} = \frac{1-v_{j_{1}}^{k}\overline{\sigma}{j_{1}}^{k}}{2}\frac{1-v_{j_{2}}^{k}\overline{\sigma}_{j_{2}}^{k}}{2}
\end{equation}

The problem Hamiltonian $H_{\ddagger}$ can then be constructed in the following compact form

\begin{equation}
\label{eq:hamiltonian}
H_{\ddagger} = \sum_{\Omega_{k}'} H_{\Omega_{k}'}
\end{equation}
where we simply sum over the energies of all $M'$ clauses in the Max $2$-SAT problem.\footnote{Interestingly, by virtue of the Hamiltonian (Eq. \eqref{eq:hamiltonian}) being the sum over all the energies of the $M'$ clauses, its ground state corresponds to the solution of the Max $2$-SAT problem. Namely, the ground state could have positive energy, corresponding to the minimum number of violated clauses.} This way, the converted Max $2$-SAT problem can be formulated in terms of the \textit{computational energy}, where identical energy penalties of $1$ are assigned for every violated clause $H_{\Omega_{k}'}$, and $0$ for all satisfied ones. Expanding Eq. \eqref{eq:hamiltonian} then gives\footnote{Notice the similarity with Eq. \eqref{eq:qubo}.}:

\begin{equation}
H_{\ddagger} = \frac{1}{4}\sum_{\Omega_{k}'}1-v_{j_{1}}^{k}\overline{\sigma}^{k}_{j_{1}}-v_{j_{2}}^{k}\overline{\sigma}^{k}_{j_{2}}+v_{j_{1}}^{k}v_{j_{2}}^{k}\overline{\sigma}^{k}_{j_{1}}\overline{\sigma}^{k}_{j_{2}}
\end{equation}
where after rescaling by a factor of $4$ and dropping the first constant term we obtain, for each of the literals indexed by $j_i \in \left[1,\ldots,N'\right]$, the entries of the external field $\left[h_{j_i}\right] \in \mathbb{R}^{N'}$ and the coupling term $\left[J_{j_1j_2}\right] \in \mathbb{R}^{\left(N'\right) \times \left(N'\right)}$

\begin{equation}
J_{j_{1}j_{2}} =  \sum_{k = 1}^{M'}v_{j_{1}}^{k}v_{j_{2}}^{k}
\end{equation}

\begin{equation}
h_{j_{i}} = - \sum_{k= 1}^{M'} v_{j_{i}}^{k}
\end{equation}

Notice that each of the entries in the Max $2$-SAT formulation, defined by the indices of the literals, is a sum over the set of $M'$ clauses.

\section{Proof of Cor. \ref{thm:retrieve:d1}}
\label{app.pf.cor}
\begin{proof}

Suppose $\ddagger$ is solved when the ancillary variables satisfy $d^k =1$ for all $k = 1,\ldots, M$. Let $\mathfrak{S} \in \mathbb{N}_{\geq 0}$ denote the total number of satisfied clauses of $\ddagger$, and let $\mathcal{V}, \mathcal{S} \in \mathbb{N}_{\geq 0}$ denote the number of violated and satisfied clauses of $\dagger$ respectively. We have the following linear Diophantine equation of \textit{three} variables:
    \begin{align}
    \label{eqn:d1LDE}
        \mathfrak{S} & = 6 \cdot \mathcal{S}_{1,2} + 7 \cdot \mathcal{S}_{1,3} + 4 \cdot \mathcal{V} \\
        \mathcal{S} & = \mathcal{S}_{1,2} + \mathcal{S}_{1,3} \nonumber ,
    \end{align}
    where $\mathcal{S}_{1,2}, \mathcal{S}_{1,3} \in \mathbb{N}_{\geq 0}$ denote the total number of satisfied clauses under with ancillary variable $1$ in Case $2$ and Case $3$ in Table \ref{table.sat_vio_table} respectively. 
    By construction, since $\mathcal{V} + \mathcal{S} = M$ (where $M$ denote the total number of clause in $\dagger$), we arrive at the following set of \textit{under-determined} linear system:
    \begin{align*}
        \mathfrak{S} & = 6 \cdot \mathcal{S}_{1,2} + 7 \cdot \mathcal{S}_{1,3} + 4 \cdot \mathcal{V} \\
        \mathcal{S} & = \mathcal{S}_{1,2} + \mathcal{S}_{1,3} \\
        M & = \mathcal{V} + \mathcal{S}.
    \end{align*}
    Since all of variables $\mathcal{S}_{1,2}, \mathcal{S}_{1,3}, \mathcal{V}$ are non-negative integers and $\gcd(4,6,7) = 1$, this system admits finitely many integer solutions. These solutions can be enumerated to determine the number of satisfied and violated clauses in the original 3-SAT problem $\dagger$.

    Now, suppose the exact number of clauses of \textit{Case 1} (i.e., clauses with the $(0,0,0)$ assignment) can be directly determined from the solved Max $2$-SAT problem $\ddagger$. In this case, $\mathcal{V}$ (the number of violated clauses in $\dagger$) becomes known. Once $\mathcal{V}$ is known, $\mathcal{S} = \mathcal{S}_{1,2} + \mathcal{S}_{1,3}$ (the number of satisfied clauses in $\dagger$) can be retrieved by solving the following linear system:
    \begin{align*}
    \begin{bmatrix}
        6 & 7 \\
        1 & 1
    \end{bmatrix} \cdot 
    \begin{bmatrix}
        \mathcal{S}_{1,2} \\
        \mathcal{S}_{1,3}
    \end{bmatrix} =
    \begin{bmatrix}
        \mathfrak{S} - 4\mathcal{V} \\
        M - \mathcal{V}
    \end{bmatrix} .
\end{align*}

This system is now fully determined, and solving it gives the exact integer values of $\mathcal{S}_{1,2}$ and $\mathcal{S}_{1,3}$, from which the number of satisfied clauses $\mathcal{S} = \mathcal{S}_{1,2} + \mathcal{S}_{1,3}$ in the original $3$-SAT problem $\dagger$ can be deduced.
\end{proof}

\section{Proof of Theorem~\ref{thm:retrieve:dall}}
\label{app.pf.thmall}

\begin{proof}
    Let $I_a := \left\{k \middle| d^k = a \right\} \subset \left\{1,\ldots, M\right\}$, then $I_0 \sqcup  I_1 = \left\{1,\ldots, M\right\}$. Consider the partition of the set of converted clauses $\left\{ \Omega'_k\right\}_{k=1}^{M}$ of $\ddagger$ by the sets of indices $I_0, I_1$: Let $\Omega'^a := \left\{ \Omega'_k \right\}_{k\in I_a}$ for $a = 0,1$, then $\left\{ \Omega'_k\right\}_{k=1}^{M} = \Omega'^0 \sqcup \Omega'^1$. 

    Suppose $\ddagger$ is solved and let $\mathfrak{S}_a \in \mathbb{N}_{\geq 0}$ denote the total number of satisfied clauses of $\Omega'^a \subset \left\{ \Omega'_k\right\}_{k=1}^{M}$ for $a = 0,1$ respectively, and let $\mathcal{V}_a, \mathcal{S}_a \in \mathbb{N}_{\geq 0}$ denote the corresponding number of violated and satisfied clauses of $\dagger$ for $a = 0,1$ respectively.

    In the case when the ancillary variables are $1$, \textit{i.e.} in $\Omega'^1 \subset \left\{ \Omega'_k\right\}_{k=1}^{M}$, let $\mathcal{S}_{1,2}, \mathcal{S}_{1,3} \in \mathbb{N}_{\geq 0}$ denote the total number of satisfied clauses with ancillary variable $d^k = 1$ under Case $2$ and Case $3$, respectively, in Table \ref{table.sat_vio_table}. Then $\mathcal{V}_1, \mathcal{S}_1 \in \mathbb{N}_{\geq 0}$ are integer solutions of the linear Diophantine equation:
    \begin{align*}
        \mathfrak{S}_1 & = 6 \cdot \mathcal{S}_{1,2} + 7 \cdot \mathcal{S}_{1,3} + 4 \cdot \mathcal{V}_1 \\
        \mathcal{S}_1 & = \mathcal{S}_{1,2} + \mathcal{S}_{1,3} ,
    \end{align*}
    which, by Corollary \ref{thm:retrieve:d1}, we can either:
    \begin{enumerate}
        \item Determine $\mathcal{S}_{1,2}, \mathcal{S}_{1,3}, \mathcal{V}$ by enumeration, or
        \item Retrieve $\mathcal{S}_{1,2}, \mathcal{S}_{1,3}, \mathcal{V}$ by counting the the exact number of clauses in Case $1$ of Table~\ref{table.sat_vio_table}.
    \end{enumerate}

    For the subset of clauses where the ancillaries are $0$, \textit{i.e.} in $\Omega'^0 \subset \left\{ \Omega'_k\right\}_{k=1}^{M}$, $\mathcal{V}_0, \mathcal{S}_0 \in \mathbb{N}_{\geq 0}$ are integer solutions of the linear Diophantine equation:
    \begin{align*}
        \mathfrak{S}_0 = 6 \cdot \mathcal{V}_0 + 7 \cdot \mathcal{S}_0,
    \end{align*}
    Eq. \eqref{eqn:d0LDE} of Theorem \ref{thm:retrieve:d0} can be written in terms of Case $2$ and Case $3$ of Table \ref{table.sat_vio_table}: Let $\mathcal{S}_{0,2}, \mathcal{S}_{0,3} \in \mathbb{N}_{\geq 0}$ denote the total number of satisfied clauses with ancillary variable $d^k = 0$ under Case $2$ and Case $3$, respectively, in Table \ref{table.sat_vio_table}, then by construction $\mathcal{S}_0 = \mathcal{S}_{0,2} + \mathcal{S}_{0,3}$ and:
    \begin{align*}
        \mathfrak{S}_0 & = 6 \cdot \mathcal{V}_0 + 7 \cdot \mathcal{S}_0 \\
                       & = 6 \cdot \mathcal{V}_0 + 7 \cdot \mathcal{S}_{0,2} + 7 \cdot \mathcal{S}_{0,3}.
    \end{align*}
    which can be determined explicitly by Theorem \ref{thm:retrieve:d0}.

    The total number of violated and satisfied clauses of $\dagger$ is thus given by $\mathcal{V} = \mathcal{V}_0 + \mathcal{V}_1$ and $\mathcal{S} = \mathcal{S}_0 + \mathcal{S}_1$, respectively, as desired.

    \begin{table}[ht!]
        \begin{center}
        \begin{tabular}{|c|c|c|} \hline
                   & $d= 0$, Thm  \ref{thm:retrieve:d0}  & $d = 1$, Cor  \ref{thm:retrieve:d1} \\ \hline
            Case 1 & $6\cdot \underbrace{S_{0,1}}_{\mathcal{V}_0}$ & $4 \cdot \underbrace{S_{1,1}}_{\mathcal{V}_1}$ \\ \hline
            Case 2 &  $7 \cdot \mathcal{S}_{0,2}$ & $6 \cdot \mathcal{S}_{1,2}$ \\ \hline
            Case 3 &  $7 \cdot \mathcal{S}_{0,3}$ & $7 \cdot \mathcal{S}_{1,3}$ \\ \hline
                   &  $+= \mathfrak{S}_0$         & $+= \mathfrak{S}_1$ \\ \hline
        \end{tabular}
        \end{center}
    \caption{Case by case satisfied clauses, notice the $d^k = 0,1$ case are disjoint with the total number of satisfied clauses of $\dagger$ in each part being $\mathcal{S}_0 = \mathcal{S}_{0,2} + \mathcal{S}_{0,3}, \mathcal{S}_1 = \mathcal{S}_{1,2} + \mathcal{S}_{1,3}$.}
    \label{table.extraproof}
    \end{table}

    To see that the sets are disjoint and, therefore, none of the pieces are being double counted, we enumerate them in Table~\ref{table.extraproof} above. 
    \end{proof}

\clearpage

\end{document}